\newtheorem{theorem}{Theorem}[section]
\newtheorem{proposition}{Proposition}[section]
\newtheorem{lemma}{Lemma}[section]
\theoremstyle{remark}
\newtheorem{remark}{Remark}[section]
\numberwithin{equation}{section}
\numberwithin{theorem}{section}
\newcommand{\abs}[1]{\lvert#1\rvert}
\newcommand{\norm}[1]{\|#1\|}
\newcommand{\fitnorm}[1]{\left| \left| #1 \right| \right|}
\def\C{\mathbb C}
\def\R{{\mathbb R}}
\def\Re{\textnormal{Re}}
\def\Im{\textnormal{Im}}
\begin{document}
\title[Approximating the Nonlinear Schr\"odinger equation]
{Approximating the nonlinear Schr\"odinger equation 
by a two level linearly implicit finite element method}

\author[M.~Asadzadeh]{M. Asadzadeh$^{1, \star}$}

\thanks{$^\star$ Partially supported by the
{\sl Swedish Foundation of Strategic Research (SSF)}
and Swedish Research Council (VR)}

\address{$^{1}$
Department of Mathematical Sciences,
Chalmers University of Technology and  G\"oteborg University,
SE--412 96, G\"oteborg, Sweden
}
\email{mohammad@chalmers.se}

\author[C.~Standar]{C. Standar$^{1, \dag}$}
\thanks{$^{\dag}$The corresponding author}
\email{standarc@chalmers.se}

% \author[G.~Zouraris]{G. Zouraris$^{2}$}

% \address{$^{2}$
% Department of Mathematic, University of Hiraklion, Greece}
% \email{g.e.zouraris@gmail.com

\keywords{Nonlinear Schr\"odinger equation, two level implicit scheme, finite element method, stability, optimal error estimates, convergence} 

\subjclass{ 65M15, 65M60, 76P05}

\begin{abstract}
We consider the study of a numerical scheme 
for an initial- and Dirichlet boundary- value problem for a nonlinear 
Schr\"odinger equation. We approximate the solution 
using a, local (non-uniform) two level scheme 
 in time (see  C. Besse \cite{Besse:98_4} and \cite{Besse:2004_4}) 
combined with, an optimal, finite element strategy for 
the discretization in the spatial variable based on 
studies outlined as, e.g. 
in \cite{Akrivis_etal:91_4} and \cite{Lopez_Sanz-Serna:91_4}. 
For the proposed fully discrete scheme,
we show convergence both in $L_2 $ and $H^1$ norms.  
\end{abstract}

\maketitle

\section{Introduction \label{introduction}}
Let $T>0$ be a final time and 
$D\subset {\mathbb R}^d, d=1,2,3$, an arbitrary, convex and 
simply connected spatial domain. We consider the following 
initial- and Dirichlet boundary- value problem for a nonlinear 
Schr\"odinger equation: find a function 
$u:[0,T]\times \bar{D}\to{\mathbb C}$ satisfying 
\begin{eqnarray}\label{Maineq_4} 
u_t=i\Delta u+i f(\abs{u}^2) u + g(t, x), \quad 
\forall (t,x)\in (0, T]\times D \label{Shor1_4}\\
u(t,\cdot)|_{\partial D}=0, \qquad \forall t\in (0, T]\label{Shor1B_4}\\
u(0,x)=u_0,  \qquad \forall x\in D,\label{Shor1I_4} 
\end{eqnarray}
where $u_0 : \bar D \to {\mathbb C}$, $f\in C^3([0,\infty); {\mathbb R})$ and 
$g\in  C^3([0, T) \times {\mathbb R}^d; \C)$. For this problem we shall 
 study a fully discrete, optimal, space-time numerical scheme based on a 
spatial discretization strategy based on \cite{Akrivis_etal:91_4} combined 
with a two-level (half-step) Crank-Nicolson and Backward Euler temporal 
discretizations. 
(With $f, \, g: {\mathbb C}\to {\mathbb C}$ being locally 
Lipschitz, well-posedness of the problem with a sufficiently smooth 
solution requires 
further smoothness and compatibility assumptions, which we shall 
consider in the semi-discrete approximation.)

The nonlinear Schr\"odinger (NLS) equation 
 is modeling several physical phenomena describing, e.g.,  quantum effects, 
with a solution that describes molecular, atomic, subatomic as well as  
 macroscopic systems.  In particular the cubic NLS (when $ f(x) = \lambda x $ for real number $ \lambda $) 
is of vital interest in applications such as nonlinear optics, 
oceanography and plasma physics. 
For a survey of significant mathematical results 
on Schr\"odinger equation we refer to an early work by 
Strauss \cite{Strauss:79_4}. Previous studies related to this
work can be found in, e.g., Akrivis et al.\ in 
\cite{Akrivis_etal:2003_4} and
\cite{Karakashian_etal:93_4}. The study in 
\cite{Akrivis_etal:2003_4} concerns an initial value problem for a radially 
symmetric nonlinear Schr\"odinger equation in 2 and 3 spatial dimensions 
discretized by a standard Galerkin combined with a 
Crank-Nicolson type time-stepping. While in 
\cite{Karakashian_etal:93_4} the authors 
consider an implicit Runge-Kutta temporal 
approximation combined with the Galerkin method for the spatial domain. 
In both studies the spatial scheme is on the background and the focus 
of analysis is on the time discretization.  
We shall give a brief approach to a more standard spatial discretization. 
In this part we relay on investigations by Akrivis and co-authors 
\cite{Akrivis_etal:91_4}, where also a second order accurate 
temporal discretizations based on a Crank-Nicolson scheme is studied.
There is a more abstract approach by Tourigny \cite{Yves:91_4} that relies on 
the nonlinear stability theory developed in 
\cite{Lopez_Sanz-Serna:91_4}. 
In  \cite{Yves:91_4}, a pointwise error bound is established and  
$H^1$ optimal estimates are derived for 
both backward-Euler and Crank-Nicolson, temporal, schemes. 
Somewhat more elaborate studies employing the discontinuous Galerkin (DG)
strategy for spatial discretization are given in 
 \cite{Shu:2005_4} and
\cite{Asadzadeh:2013_4},
where the DG method
for the coupled NLS equations are considered. More specifically, 
while \cite{Shu:2005_4} concerns the $ L_2 $-stability and implementations,
the work in \cite{Asadzadeh:2013_4} is devoted to
multiscale variational approach 
for the space-time discretization of a coupled
NLS equation and corresponding implementations. A related study,   
with a finite difference approach, is given 
by Akrivis and co-authors in \cite{Akrivis_etal:2001_4} for the
linear Schr\"odinger-type equation. 

In this paper we extend the uniform time scheme studied by 
Zouraris in \cite{Zouraris:2001_4} to the case of  
a two-time-level, non-uniform,  linearly implicit finite element scheme. 
The finite element approach for the spatial 
discretization is widely studied in full details inheriting
some crucial results from the nonlinear heat equation,
therefore, as mentioned above, the finite element discretization
will appear as a background scheme with its crucial results presented in 
overview form. Hence, although fully discrete, most of the new 
contribution in here concerns 
temporal approximation.  In this setting, 
assuming a regularity of the exact solution of 
$u\in H^2((0,t]; H^r(D))$,  
we prove convergence rate of order 
${\mathcal O}(k^2+h^{r})$ in the $L_2(D)$-norm and a gradient estimate
 with accuracy  
${\mathcal O}(k + h^{r-1})$ in the $H^1_0(D)$-norm.
The $ L_2 $-estimate in here is optimal. As for the gradient
estimate, comparing with the theoretical result by Tourigny
\cite{Yves:91_4}, our gradient estimate is sharp as well.
Whereas compared to Zouraris \cite{Zouraris:2001_4},
where the spatial gradient estimate is of order
$ \mathcal{O} (k^2 + h^{r - 1}) $, due to the fact
that there is no time derivate involved, our result
is suboptimal.

An outline of this paper is as follows. In Section 2 we introduce some notation 
and preliminaries necessary in the analysis. In Section 3 we 
introduce two related spatial discretization strategies, study 
the convergence of the simplest one and derive the optimal semi-discrete 
error estimates. The results in here are of overview nature and are 
for the sake of completeness. Section 4 is devoted to the study of a two-level 
non-uniform grid time discretization
of a (background) Galerkin finite element solution   
obtained in Section 3. In this section we also
include the consistency of the temporal scheme.
The convergence analysis is singled out and presented in the 
concluding Section 5. Finally in Section 6 we give a conclusion 
of the results of the paper. 
Throughout the paper $C$ will denote a 
generic constant that might be different at  different appearances and is 
independent of all involved parameters and functions unless otherwise 
explicitly stated.

\section{Notation and preliminaries} 
We employ the $L_2(D)$-based, complex inner product and the  bilinear form
$$
(u,v)=\int_Du(x)\overline{v(x)}\, dx,\quad\mbox{and}\quad 
a(u,v)=\int_D\nabla u(x)\overline{\nabla v(x)}\, dx, 
$$ 
respectively.  
For a  {\sl multi-index} 
$\alpha=(\alpha_1, \alpha_2\ldots,\alpha_d)$,
$\alpha_i\ge 0$, and with $\abs{\alpha}=\alpha_1+ \alpha_2+\ldots+\alpha_d$   
we recall the standard Sobolev space  
$$ 
H^s (D):=\{v: {\mathcal D}^\alpha v\in L_2(D); \abs{\alpha}\le s\}, 
$$
of 
all complex-valued functions in $L_2(D)$, also 
having all their distribution derivatives of order $\le s$ in 
$L_2(D)$. $ H^s (D) $ is associated
with the norm and seminorm
$$
\norm{v}_s=\norm{v}_{H^s(D)} 
=\Big(\sum_{ \abs{\alpha}\le s}
\norm{{\mathcal D}^\alpha v}^2_{L_2(D)}\Big)^{1/2}, \quad \mbox{and}\quad 
\abs{v}_s
=\Big(\sum_{ \abs{\alpha}=s}
\norm{{\mathcal D}^\alpha v}^2_{L_2(D)}\Big)^{1/2},  
$$
respectively. Hence $L_2=H^0$ and we denote 
$\norm{v}:=\norm{v}_{L_2}=\norm{v}_0$. 
Further we define 
$$
H^1_0(D):=\{v\in H^1 (D); v=0 \mbox{ on } \partial D\}.
$$
We shall also use the space of continuously differentiable functions 
$C^m(D)$ on $D$, consisting of the space of all complex valued functions 
$v$ with all their partial derivatives  ${\mathcal D}^\alpha v$ of order 
$ \abs{\alpha}\le m$ being continuous in $D$. Further, 
$C^m(\overline D)$ presents the space of functions $v\in  C^m(D)$ 
for which ${\mathcal D}^\alpha v$ is bounded and uniformly continuous 
in $D$ for $ \abs{\alpha}\le m$. The norm in $C^m(\overline D)$ is defined as 
$$
\norm{v}_{C^m(\overline D)}=\max_{\abs{\alpha}\le m}\sup_{x\in \bar D}
\abs{{\mathcal D}^\alpha v(x)}.
$$ 
In this setting it is easy to verify that the solution $u(t):=u(t, \cdot)$ 
satisfies the boundedness relation
in the sense that  
\begin{equation}\label{invariant_4}
\norm{u(t)}\leq \norm{u_0} + \int_0^t \norm{g(\tau, \cdot)} \, d\tau,\qquad 0\le t\le T. 
\end{equation}

Frequently an abstract and
extended version of the Sobolev spaces to time dependent functions
appear in our reference literature; see, e.g., 
\cite{Evans:2002_4}. Below we include a brief notation:
For a Banach space $X$ with the norm 
$\norm{\cdot}_X$  the function space 
$L_p (0, T; X)$ consists of all strongly measurable functions 
$\bm{u}:[0, T]\to X$ with 

\begin{equation*}
\norm{\bm{u}}_{L_p (0,T; X)} :=
\left\{  
\begin{array}{ll} \displaystyle 
\Big(\int_0^T\norm{\bm{u}(t)}_X^p \, dt\Big)^{1/p}, 
\qquad & 1\le p<\infty, \\
\mbox{ ess } \sup_{0\le t\le T} \norm{\bm{u}(t)}_X,
& p=\infty. 
\end{array}
\right.
\end{equation*}
Then the Sobolev space $W^{s,p}(0, T; X)$ is defined by boundedness of 
the norms of its elements, viz. 
\begin{equation*}
\norm{\bm{u}}_{W^{s,p}(0,T; X)}=
\left\{  
\begin{array}{ll} \displaystyle 
\Big(\sum_{m=0}^s\int_0^T
\norm{\frac{\partial^m \bm{u}}{\partial t^m}}_X^p\, dt\Big)^{1/p}, 
\qquad & 1\le p<\infty, \\
\max_{0\le m\le s} \mbox{ ess } \sup_{t\in [0,T]} 
\norm{\frac{\partial^m\bm{u}}{\partial t^m}(t, \cdot)}_X, 
& p=\infty. 
\end{array}
\right.
\end{equation*}
Obviously $0$ and $T$ can be replaced by 
 any $t_1$ and $t_2$ with $0\le t_1<t_2$ 
and hence $[0, T]$  by $[t_1, t_2]$.

\section{The spatial discretization Scheme}  
The spatial discretization for the Schr\"odinger equation is by now 
a standard procedure, considered by several authors.
At first glance it can be viewed as an extension
of the results for heat equation. This however is
not a straight-forward strategy, due to
the complex terms in the NLS.
Nevertheless, below, for the sake of completeness,
we introduce two equivalent spatial 
discretization strategies outlined in \cite{Yves:91_4} and 
\cite{Akrivis_etal:91_4}, respectively:  

{\bf SI}. 
Let $N$ be a positive integer, and, for $n=1,\ldots, N$, 
let  $\{S_h^n\}_{h\in (0,1)}\subset H_0^1(D)$ 
be a family of finite dimensional subspaces. 
Consider a partition of the time interval $[0, T]$ 
into not necessarily uniform subintervals $I_n:=(t_{n-1}, t_{n})$ and let 
$k_n:=\abs{I_n}=t_n-t_{n-1}$ be the length of $I_n$. Denoting by 
$u_h^n\in S_h^n$ an approximation of 
$u(t_n)$, we construct a vector 
$u_h=(u_h^0, u_h^1, u_h^2, \ldots, u_h^n) \in X_h$ by solving a discretized 
problem of the form 
\begin{equation*}\label{SI1}
\bm{ \Phi}_h (u_h)=0.
\end{equation*} 
Here $\bm{\Phi}_h:X_h\to Y_h$ is a nonlinear mapping with 
$$
\left\{
\begin{aligned}
X_h & = S_h^0\times S_h^1 \times \cdots\times S_h^N\\
Y_h & = (S_h^0)^*\times (S_h^1)^* \times \cdots\times (S_h^N)^*.
\end{aligned}
\right. 
$$ 
where $(S_h^n)^*$ is the dual space of 
$(S_h^n, \norm{\cdot}),\, n=0,\ldots , N$ 
equipped with the norm 
$$
\norm{\cdot}_*=
\sup_{\varphi\in S_h^n}\frac{\abs{\langle \cdot, \varphi \rangle}}{\norm{\varphi}}.
$$
Note that we have chosen $N+1$ different finite element spaces, 
with $S_h^n$ corresponding to discrete time level $t_n,\, n=0,\ldots, N$. 
Detailed stability and convergence analysis in this regi are given 
by Lopez-Marcos and Sanz-Serna in \cite{Lopez_Sanz-Serna:91_4} 
(see also the analysis in \cite{Yves:91_4}). 

Our focus will be on a simpler strategy, based on an approach by 
\cite{Akrivis_etal:91_4} as outlined below:  
 
{\bf SII}. 
At each time level,  
let $\{S_h\}_{h\in (0,1)}$ be a family of finite dimensional subspaces of 
${\mathcal H}_0:=H_0^1(\Omega)\cap C(\overline D) $ 
satisfying the approximation property
\begin{equation}\label{Approx1_4}
\inf_{\chi\in S_h}\Big\{ \norm{v-\chi}+h \norm{v-\chi}_1\Big\}
\le C h^s\norm{v}_s,\quad \forall v\in H^s(D)\cap H^1_0(D), 
\quad 2\le s\le r, 
\end{equation}
for all  $h\in (0,1)$, where $s$ is an integer. Here for  
a quasi-uniform 
family of partitions of $D$,  
$\{S_h\}$ is the set of all continuous functions with their  
real and imaginary parts being piecewise polynomials of degree $r-1$ on 
$D$, where 
$r\ge 2$. Then, for $\varphi\in S_h$, we have the following inverse 
inequalities due to \cite{Suli:88_4}: 
\begin{equation}\label{Invers1_4}
\norm{\varphi}_{L_\infty(D)}\le C
\left\{
\begin{aligned}
&h^{-\frac d2}\norm{\varphi},\qquad & d=1,2,3, \\
&h^{1-\frac d2}(\log(1/h))^{1-1/d}\norm{\nabla\varphi}, & d=2, 3, 
\end{aligned}
\right.
\quad \forall \varphi\in S_h.
\end{equation}
Now using \eqref{Approx1_4}, \eqref{Invers1_4} and assuming 
existence of certain operator bounds,
one can deduce that (see \cite{Akrivis_etal:91_4})
\begin{equation}\label{Invers2_4} 
\lim_{h\to 0} \, \sup_{0\le t\le T} \, \inf_{\varphi\in S_h} 
\{\abs{u(t)-\varphi}_\infty+h^{-\frac d2} \norm{u(t)-\varphi}\}=0.
\end{equation}
To proceed, for $h\in (0,1)$, we define the discrete Laplacian operator 
$\Delta_h:S_h\to S_h$, as 
$$
(\Delta_h \varphi, \chi)=-(\nabla\varphi, \nabla\chi), \quad \forall 
\varphi, \, \chi\in S_h,
$$
and an elliptic projection operator 
$R_h:H^1(D)\to S_h$ by 
$$
(\nabla R_h v, \nabla\chi)= (\nabla v, \nabla\chi), \quad 
\forall v\in H^1(D), \quad \forall\chi\in S_h.  
$$
Then (see, e.g., \cite{Vidar_4} and \cite{Al_Lars:82_4}) 
$R_h$ satisfies the approximation properties
\eqref{Approx1_4}, viz. 
\begin{equation}\label{Ellip1_4}
\norm{R_h v-v}+h \norm{ R_h v- v}_1
\le C_R h^s\norm{v}_{s},\quad \forall v\in H^s(D)\cap H^1_0(D), 
\quad 2\le s\le r, 
\end{equation}
and 
\begin{equation*}\label{Ellip1A_4}
\norm{R_h v(t)-v(t)}_{L_\infty(D)} 
\le C_R (\log(1/h)^{\bar s}h^s\norm{v}_{W^{s,\infty}(D)},
\end{equation*}
for all $ h\in (0,1)$, where $\bar s=1$ if $s=2$ and $d\ge 2$, 
and zero otherwise. Further 
\begin{equation*}\label{Ellip2_4}
\norm{\nabla R_h v}\le \norm{\nabla v},\quad  \forall v\in H^1(D), 
\quad \forall h\in (0,1).
\end{equation*}
Now a, time continuous, 
variational formulation for the problem \eqref{Shor1_4} reads as follows: 
Find $u\in H^1_0(D)$, such that 
\begin{equation*}\label{Varform1_4}
\left\{
\begin{array}{l}
(u_t, \chi)+i(\nabla u, \nabla \chi)-i(f(\abs{u}^2)u , \chi)=(g, \chi), 
\qquad \forall \chi \in H^1_0(D), \\
u(0,x)=u_0(x).
\end{array}
\right.
\end{equation*}
The corresponding finite element problem is
formulated as finding $u_h\in S_h$, such that 
\begin{equation}\label{FEM1_4}
\left\{
\begin{array}{l}
(u_{h,t}, \chi)+i(\nabla u_h, \nabla \chi)-i(f(\abs{u_h}^2)u_h , \chi)
=(g, \chi), 
\qquad \forall \chi \in S_h \\
u_h(0,x)=u_{0,h}(x),
\end{array}
\right.
\end{equation}
where $u_{0,h}$ is an approximation of $u_0$ in $S_h$.

\subsection{Spatial/semidiscrete error estimate} 
As we mentioned in the introduction, the finite element 
schemes for the spatial discretization of the Schr\"odinger equation 
\eqref{Maineq_4} are 
fully considered in the literature. This section is a 
brief review of two equivalent spatial 
discretization strategies 
 adequate in our fully discrete study.  

As a crucial property of the finite element scheme \eqref{FEM1_4}, 
in analogy with \eqref{invariant_4},   
we can easily verify that the $L_2$-norm of the semidiscrete solution 
$u_h(t):=u_h(\cdot, t)$ is bounded in the following sense
\begin{equation}\label{FEMinvariant_4}
\norm{u_h(t)} \leq \norm{u_{0,h}} + \int_0^t \norm{g(\tau, \cdot)} \, d\tau, \qquad 0\le t\le T.
\end{equation}
Then, the existence of a unique solution $u_h$ for 
\eqref{FEM1_4} would follow recalling that $f$ is locally Lipschitz
together with the relations \eqref{Invers1_4} and \eqref{FEMinvariant_4}. 

The convergence estimate for this semidiscrete problem is derived in 
 \cite{Akrivis_etal:91_4}. 
Below, for the sake of completeness, 
we outline a concise approach to their proof:

\begin{theorem} Assume that $u\in H^s(D)$. Then, 
  the finite element solution $u_h(t)$ for \eqref{FEM1_4} inherits the 
convergence rate for an appropriately chosen approximation $u_{0,h}$ 
for $u_0$ and 
yields the optimal convergence rate, viz. 
\begin{equation*}\label{FEMconvergent1_4}
\norm{u_0-u_{0,h}}\le Ch^s\Longrightarrow 
\max_{0\le t\le T}\norm{u(t)-u_{h}(t)}\le Ch^s\norm{u}_s.
\end{equation*}
\end{theorem}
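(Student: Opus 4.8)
The plan is to use the classical elliptic-projection splitting of the error, exploiting the skew-adjointness of the Laplacian term that is characteristic of the Schr\"odinger equation, and to control the nonlinearity by a bootstrap argument on an $L_\infty$ bound for $u_h$. First I would write
$$
u(t)-u_h(t)=\rho(t)+\theta(t),\qquad \rho:=u-R_hu,\quad \theta:=R_hu-u_h\in S_h,
$$
so that $u-u_h$ is decomposed into a projection part $\rho$ lying outside $S_h$ and a discrete part $\theta$ lying in $S_h$. The term $\rho$ is bounded immediately by the elliptic-projection estimate \eqref{Ellip1_4}, giving $\norm{\rho(t)}\le C_R h^s\norm{u(t)}_s$, so the entire difficulty is transferred to estimating $\theta$.

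Next I would derive an error equation for $\theta$ by subtracting the finite element identity \eqref{FEM1_4} from the exact variational formulation and using the defining property of $R_h$, which annihilates the gradient coupling $a(\rho,\chi)$. This yields
$$
(\theta_t,\chi)+i(\nabla\theta,\nabla\chi)=-(\rho_t,\chi)+i\big(f(\abs{u}^2)u-f(\abs{u_h}^2)u_h,\chi\big),\qquad\forall\chi\in S_h.
$$
Choosing $\chi=\theta$ and taking real parts triggers the decisive Schr\"odinger cancellation: since $(\nabla\theta,\nabla\theta)=\norm{\nabla\theta}^2$ is real, the term $\Re\,[\,i(\nabla\theta,\nabla\theta)\,]$ vanishes, so the highest-order contribution drops out and one is left with $\tfrac12\tfrac{d}{dt}\norm{\theta}^2\le\norm{\rho_t}\,\norm{\theta}+\big|\Re\,i\big(f(\abs{u}^2)u-f(\abs{u_h}^2)u_h,\theta\big)\big|$.

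The main work, and the main obstacle, lies in the nonlinear term, because $f$ is only locally Lipschitz and a bound on the Lipschitz constant requires an a priori $L_\infty$ control of $u_h$. I would establish such a bound on all of $[0,T]$ by a continuation argument: assuming $\norm{u_h(t)}_{L_\infty}$ stays inside a fixed ball on a maximal subinterval, the local Lipschitz property gives
$$
\abs{f(\abs{u}^2)u-f(\abs{u_h}^2)u_h}\le C\big(\norm{u}_{L_\infty},\norm{u_h}_{L_\infty}\big)\big(\abs{\rho}+\abs{\theta}\big),
$$
whence the nonlinear pairing is bounded by $C(\norm{\rho}+\norm{\theta})\norm{\theta}$. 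Inserting the $\rho$ and $\rho_t$ estimates from \eqref{Ellip1_4} and applying Gronwall's inequality then produces $\norm{\theta(t)}\le Ch^s$ uniformly on the subinterval. Feeding this back through the inverse inequality \eqref{Invers1_4} together with \eqref{Invers2_4} shows that $\norm{u_h(t)}_{L_\infty}$ cannot reach the boundary of the ball, so the maximal subinterval is in fact all of $[0,T]$. Combining the $\rho$ and $\theta$ bounds by the triangle inequality yields the asserted estimate $\max_{0\le t\le T}\norm{u(t)-u_h(t)}\le Ch^s\norm{u}_s$. The one technical subtlety I would flag is that controlling $\rho_t=u_t-R_hu_t$ requires regularity of $u_t$; this is tacitly stronger than the bare hypothesis $u\in H^s(D)$ and is where additional smoothness of the exact solution is silently invoked.
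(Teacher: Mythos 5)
Your proposal is correct and its core is the same as the paper's: the elliptic-projection splitting $u-u_h=\rho+\theta$, the error equation tested with $\chi=\theta$, the vanishing of $\Re\,[\,i\norm{\nabla\theta}^2]$, and Gr\"onwall. The one genuine difference is how the locally Lipschitz nonlinearity is tamed. The paper replaces $f$ by a \emph{globally} Lipschitz truncation $f_\varepsilon$ agreeing with $f$ on a neighbourhood $M_\varepsilon$ of the range of $u$, runs the entire Gr\"onwall argument for the auxiliary solution $w_h$ of the truncated scheme \eqref{AuxiliaryFEM1_4}, and only at the end uses the inverse inequality \eqref{Invers1_4} and \eqref{Invers2_4} to show that $w_h$ stays inside $M_\varepsilon$ for $h\le h_0$, whence $w_h=u_h$ by uniqueness. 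You instead work directly with $u_h$ and close the $L_\infty$ bound by a continuation argument on a maximal subinterval. Both routes are sound; the truncation has the advantage that global existence of $w_h$ is automatic (globally Lipschitz right-hand side), so no separate local-existence-plus-continuation bookkeeping for the nonlinear ODE system is needed, whereas your bootstrap must quietly invoke that the finite-dimensional system for $u_h$ continues to exist as long as the $L_\infty$ bound holds. Your closing remark about $\rho_t$ requiring regularity of $u_t$ beyond the stated hypothesis $u\in H^s(D)$ is accurate: the paper's own proof uses $\norm{\rho_t}$ in the Gr\"onwall step and tacitly assumes the same additional smoothness.
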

\begin {proof}
Given $\varepsilon>0$, let 
$
M_\varepsilon=\{z\in\mathbb C: \exists (x,t)\in \overline D\times [0,T]\,\, 
\abs{z-u(x,t)}<\varepsilon\}, 
$ 
and define a globally Lipschitz function 
$f_\varepsilon:{\mathbb C}\to {\mathbb C}$, so that 
$f_\varepsilon(z)=f(z)$ for $z\in M_\varepsilon$. 
Now we define an auxiliary function $w_h:[0,t]\to S_h$ 
as the unique solution of 
\begin{equation}\label{AuxiliaryFEM1_4}
\left\{
\begin{array}{l}
(w_{h,t}, \chi)+i(\nabla w_h, \nabla \chi)-i(f_\varepsilon(\abs{w_h}^2)w_h , \chi)
=(g, \chi), 
\qquad \forall \chi \in S_h \\
w_h(0,x)=u_{0,h}(x), 
\end{array}
\right.
\end{equation}
The proof is now based on first establishing the auxiliary estimate 
\begin{equation}\label{AuxiliaryFEM2_4}
\max_{0\le t\le T}\norm{u(t)-w_{h}(t)}\le 
C\Big(\norm{u_0-u_{0,h}}+ h^s\norm{u}_s\Big), 
\end{equation}
and then justifying the fact that indeed, for sufficiently small $h$,  
$u_h$ and $w_h$ coincide. To show \eqref{AuxiliaryFEM2_4} we shall use the 
split $u-w_h=(u-R_hu)+(R_h u-w_h):=\rho+\theta$. Now, a combination of 
\eqref{Maineq_4}, \eqref{FEM1_4}, and \eqref{AuxiliaryFEM1_4} yields 
$$
(\theta_{t}, \chi)+i(\nabla \theta, \nabla \chi)
=-(\rho_{t}, \chi)-
i(f_\varepsilon(\abs{w_h}^2)w_h- f_\varepsilon(\abs{u}^2)u , \chi),\,\,\, 0\le t\le T,
$$
where we used the fact that $f_\varepsilon$ coincides with $f$ in $M_\varepsilon$. 
Next, we take $\chi=\theta$ and consider the real part to get 
$$
\frac 12\frac{d}{dt}\norm{\theta(t)}^2\le 
\Big(\norm{f_\varepsilon(\abs{w_h}^2)w_h- f_\varepsilon(\abs{u}^2)u}+
\norm{\rho_{t}}\Big)\norm{\theta(t)}. 
$$
Consequently 
\begin{equation*}\label{AuxiliaryFEM3_4}
\begin{split}
\frac{d}{dt}\norm{\theta(t)}\le & L\norm{w_h-u}\norm{w_h+u}_{\infty}\norm{w_h}+
\norm{w_h-u}\norm{f_\varepsilon(\abs{u}^2)}+\norm{\rho_{t}}\\
&\le 
C(\norm{\theta(t)}+\norm{\rho(t)}+\norm{\rho_{t}(t)}+ \norm{u_0}
 +\norm{u_{0,h}}).
\end{split}
\end{equation*}
where we have used the Lipschitz continuity of $f$ with Lipschitz constant $L$.
Here we assumed that $ \max ( \norm{u}_\infty, \norm{w_h}_\infty) < C(t) $,
which can be motivated by the stability estimates 
\eqref{invariant_4} and \eqref{FEMinvariant_4} (the latter for $w_h$ replacing 
$u_h$). Now by Gr\"onwall's lemma, the property of the 
elliptic operator $R_h$ and \eqref{Ellip1_4} we deduce the desired estimate 
\eqref{AuxiliaryFEM2_4}. Evidently combining the assumption of the theorem: 
$\norm{u_0-u_{0,h}} \le Ch^s $ and \eqref{AuxiliaryFEM2_4} yields 
\begin{equation}\label{AuxiliaryFEM4_4}
\max_{0\le t\le T}\norm{u(t)-w_{h}(t)}\le Ch^s.
\end{equation}
Further, by \eqref{Invers1_4}, \eqref{Ellip1_4} and \eqref{AuxiliaryFEM4_4}, we have for $0\le t\le T$ and $\chi\in S_h$, 
\begin{equation*}\label{AuxiliaryFEM5_4}
\begin{split}
\norm{u(t)-w_h(t)}_\infty\le & 
 \norm{u(t)-\chi}_\infty+\norm{\chi-R_hu(t)}_\infty+\norm{R_hu(t)-w_h(t)}_\infty\\
\le & \norm{u(t)-\chi}_\infty+Ch^{-\frac d2}(\norm{u(t)-\chi}+\norm{\rho(t)}+
\norm{\theta(t)})\\
\le & \norm{u(t)-\chi}_\infty+Ch^{-\frac d2}\norm{u(t)-\chi}+Ch^{s-\frac 12}. 
\end{split}
\end{equation*}
Now, recalling \eqref{Invers2_4}, we deduce that:
$$
\exists h_0>0: \forall h\le h_0, w_h(x,t)\in M_\varepsilon, \quad 
(x,t)\in \overline D\times [0,T].
$$
For such $h$, $u_h=w_h$ and the proof follows from \eqref{AuxiliaryFEM4_4}.
\end{proof}
\section{A time discretization scheme} 
Let $N\in {\mathbb N}$ and $\{t_n\}_{n=0}^N$ be the nodes of a non-uniform partition 
of the time interval $[0, T]$, i.e. 
$t_n<t_{n+1}$ for $n=0, 1, \ldots, N-1$ , $t_0=0$ and $t_N=T$. 
Then, we set $k_n:= t_n-t_{n-1}$ for $n=0, 1, \ldots, N$ and consider the 
following two time-step numerical scheme: 

{\bf Step 1.} Set 
$$
U_h^0=u_{0,h},
$$
where 
$ u_{0,h} = R_h u_0 $.

 {\bf Step 2.} For $n=1,2, \ldots, N$, first find $U_h^{n-\frac 12}\in S_h$ 
such that 
\begin{equation*}
\begin{split}
\left( \frac{U_h^{n-\frac 12}-U_h^{n-1}}{k_n/2}, \chi \right) =&
i\left( \frac{\nabla U_h^{n-\frac 12}+ \nabla U_h^{n-1}}{2}, \nabla \chi \right) \\
 &+ i  \left( f(\abs{U_n^{n-1}}^2)\frac{U_h^{n-\frac 12}+U_h^{n-1}}{2}, \chi \right)
+ \left( g( t_{n-1}, \cdot), \chi \right),
\end{split}
\end{equation*}
for all $ \chi \in S_h $ and then find $U_h^{n}\in S_h$ such that 
\begin{equation*}
\begin{split}
\left( \frac{U_h^{n}-U_h^{n-1}}{k_n}, \chi \right)=&
i \left( \frac{\nabla U_h^{n}+ \nabla U_h^{n-1}}{2}, \nabla \chi \right) \\
&+ i \left( f(\abs{U_h^{n-\frac 12}}^2)\frac{U_h^{n}+U_h^{n-1}}{2}, \chi \right)
+ \left( g(t_{n-\frac 12}, \cdot), \chi \right),
\end{split}
\end{equation*}
for all $ \chi \in S_h $.
 
\subsection{Consistency}
Below we show the consistency of the scheme defined in step 2. 
To this approach,  
for $n=1,2,\ldots, N$, we define $r^{n-\frac 12}$ and $r^n$ by 
\begin{equation}\label{rn0.5_4}
\begin{split}
\frac{u^{n-\frac 12}-u^{n-1}}{k_n/2} =&
i\Delta \Big(\frac{u^{n-\frac 12}+u^{n-1}}{2}\Big)
+i f(\abs{u^{n-1}}^2)\frac{u^{n-\frac 12}+u^{n-1}}{2} \\
&+g(t_{n-1}, \cdot)+r^{n-\frac 12}
\end{split}
\end{equation}
and 
\begin{equation}\label{rn1_4}
\begin{split}
\frac{u^{n}-u^{n-1}}{k_n}=&
i\Delta \Big(\frac{u^{n}+u^{n-1}}{2}\Big)
+i f(\abs{u^{n-\frac 12}}^2)\frac{u^{n}+u^{n-1}}{2}\\ 
&+g(t_{n-\frac 12}, \cdot)+r^n, 
\end{split}
\end{equation}
respectively, where $u^n=u(t_n, \cdot)$ for $n=0, 1, \ldots, N$. 
Then, we have the following estimates for $r^{n-\frac 12}$ and $r^n$: 
\begin{proposition} Assume that there is a constant $C_1$ such that 
\begin{equation}\label{rn0.5A_4}
\max\Big( \norm{\partial_t u}_\infty, \,
\norm{\partial_t^2 u}_\infty,\, \norm{\Delta\partial_t u}_\infty \Big)< C_1, 
\end{equation}
then  
\begin{equation*}
\norm{r^{n-\frac 12}}\le C k_n. 
\end{equation*}
Further if, in addition to \eqref{rn0.5A_4},  we have that there is a constant 
$C_2$ such that 
\begin{equation}\label{rn1A_4}
\max\Big( \norm{\partial_t^3 u}_\infty, \,
\, \norm{\Delta\partial_t^2 u}_\infty \, \Big)< C_2, 
\end{equation}
then 
\begin{equation*}
\norm{r^{n}}\le C k_n^2.
\end{equation*}
\end{proposition}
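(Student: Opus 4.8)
The plan is to substitute the exact solution $u^n = u(t_n,\cdot)$ into the discrete relations \eqref{rn0.5_4} and \eqref{rn1_4}, Taylor expand each term in time about a judiciously chosen reference node, and then subtract the differential equation \eqref{Maineq_4} evaluated at that same node. With this substitution $r^{n-\frac12}$ and $r^n$ become precisely the collected Taylor remainders, which I would bound in the $L_2(D)$-norm by passing through the space--time $L_\infty$ bounds of \eqref{rn0.5A_4} and \eqref{rn1A_4}; this is legitimate because $D$ is bounded, so $\norm{\cdot}\le\abs{D}^{1/2}\norm{\cdot}_\infty$ converts a pointwise remainder bound into the required $L_2$ bound.

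For the first estimate the natural reference node is the left endpoint $t_{n-1}$. I would expand $u^{n-\frac12}$ about $t_{n-1}$ to first order with remainder, so that the difference quotient on the left of \eqref{rn0.5_4} equals $\partial_t u(t_{n-1})$ plus a term of size $\tfrac{k_n}{4}\partial_t^2 u$, while $\tfrac12(u^{n-\frac12}+u^{n-1}) = u^{n-1}+\mathcal{O}(k_n)$, the error being controlled by $\partial_t u$ directly and by $\Delta\partial_t u$ after applying $\Delta$. Since the coefficient $f(\abs{u^{n-1}}^2)$ and the source $g(t_{n-1},\cdot)$ are frozen exactly at $t_{n-1}$, they need no expansion, and the right-hand side reduces to $i\Delta u^{n-1}+if(\abs{u^{n-1}}^2)u^{n-1}+g(t_{n-1},\cdot)$ up to an $\mathcal{O}(k_n)$ remainder. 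Subtracting \eqref{Maineq_4} at $t=t_{n-1}$, whose right-hand side is exactly $\partial_t u(t_{n-1})$, cancels the leading terms and leaves only the first-order remainders, giving $\norm{r^{n-\frac12}}\le Ck_n$ with $C$ depending on $C_1$ and on the bound for $f(\abs{u}^2)$ along the (bounded) exact solution.

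For the second estimate the essential point is to expand about the \emph{midpoint} $t_{n-\frac12}$ rather than an endpoint. Writing the Taylor expansions of $u^n$ and $u^{n-1}$ about $t_{n-\frac12}$, the symmetry makes the odd-order terms cancel: the difference quotient $\tfrac1{k_n}(u^n-u^{n-1})$ equals $\partial_t u(t_{n-\frac12})$ up to an $\mathcal{O}(k_n^2)$ error measured by $\partial_t^3 u$, and the average $\tfrac12(u^n+u^{n-1})$ equals $u^{n-\frac12}$ up to an $\mathcal{O}(k_n^2)$ error; applying $\Delta$ to the latter yields $i\Delta u^{n-\frac12}+\mathcal{O}(k_n^2)$ with the remainder measured by $\Delta\partial_t^2 u$. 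Crucially, the corrector \eqref{rn1_4} evaluates the nonlinear coefficient at the predictor level $\abs{u^{n-\frac12}}^2$ and the source at $t_{n-\frac12}$, so both are centered exactly at the midpoint; hence $if(\abs{u^{n-\frac12}}^2)\tfrac12(u^n+u^{n-1}) = if(\abs{u^{n-\frac12}}^2)u^{n-\frac12}+\mathcal{O}(k_n^2)$ using boundedness of $f$, and $g(t_{n-\frac12},\cdot)$ contributes no error at all. Subtracting \eqref{Maineq_4} at $t=t_{n-\frac12}$ again removes the leading terms and leaves only second-order remainders, yielding $\norm{r^n}\le Ck_n^2$.

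The calculations are routine once the reference node is fixed; the only genuinely structural point — and the reason the two residuals have different orders — is the placement of the frozen nonlinear coefficient and source. In the predictor \eqref{rn0.5_4} these are evaluated at $t_{n-1}$ while the difference quotient is effectively centered near the interior of $[t_{n-1},t_{n-\frac12}]$, so their mismatch is irreducibly $\mathcal{O}(k_n)$, whereas in the corrector \eqref{rn1_4} everything is symmetric about $t_{n-\frac12}$ and the predictor value $u^{n-\frac12}$ feeds the nonlinear term precisely at the midpoint, enabling the full cancellation to $\mathcal{O}(k_n^2)$. Thus the main thing to get right is not any single estimate but verifying exactly which time-derivative bounds each remainder requires, which is why \eqref{rn1A_4} (the $\partial_t^3 u$ and $\Delta\partial_t^2 u$ bounds) is needed in addition to \eqref{rn0.5A_4} only for the second-order conclusion.
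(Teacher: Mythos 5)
Your proposal is correct and follows essentially the same route as the paper: subtract the PDE at $t_{n-1}$ (resp.\ $t_{n-\frac12}$), Taylor expand about that node with remainders controlled by the assumed $L_\infty$ bounds on $\partial_t u,\partial_t^2 u,\Delta\partial_t u$ (resp.\ $\partial_t^3 u,\Delta\partial_t^2 u$), and use the symmetry about the midpoint to gain the extra order for $r^n$. Your structural remark about why the predictor residual is only first order matches the mechanism in the paper's $S_1,S_2,S_3$ versus $J_1,J_2,J_3$ decomposition.
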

\begin{proof}
We start proving the second assertion.
Subtracting the Schr\"odinger equation at time $t=t_{n-\frac 12}$ from 
the equation \eqref{rn1_4} 
yields 
\begin{equation*}
\begin{split}
r^n =& \frac{u^{n}-u^{n-1}-k_n u_t^{n-\frac 12}}{k_n} - 
i\Delta\Big(\frac{u^{n}+u^{n-1}-2u^{n-\frac 12}}{2}\Big)\\
&-i f(\abs{u^{n-\frac 12}}^2)
\Big(\frac{u^{n}+u^{n-1}-2u^{n-\frac 12}}{2}\Big)  
:=J_1-J_2-J_3.
\end{split}
\end{equation*}
We estimate each 
$\norm{J_i}$, $i=1,2,3$, separately.  
To this end we Taylor expand $u^n$ and $u^{n-1}$ about $t_{n-\frac 12}$ 
of degree 2 for $J_1$ and of degree 1 for $J_2$ and $J_3$. As for $J_1$, 
by cancellations in Taylor expansions we end up with 
\begin{equation*}\label{rnJ1A}
 J_1=\frac{1}{6 k_n}\int_{t_{n-\frac 12}}^{t_n} 
\partial_t^3 u(t, \, \cdot)(t_n-t)^2\, dt - 
\frac{1}{6 k_n}\int_{t_{n-1}}^{t_{n-\frac 12}} 
\partial_t^3 u(t, \, \cdot)(t_{n-1}-t)^2\, dt. 
\end{equation*}
Now since $(t_n-t)^2\le k_n^2/4$ on the interval $(t_{n-\frac 12}, t_n)$, 
likewise $(t_{n-1}-t)^2\le k_n^2/4$ on the interval 
$(t_{n-1}, t_{n-\frac 12})$, we have using \eqref{rn1A_4} that 
\begin{equation*}\label{rnJ1B}
\norm{J_1}\le \frac{k_n}{24}\int_{t_{n-1}}^{t_n} 
\norm{\partial_t^3u(t,\, \cdot)}\, dt \le C k_n^2. 
\end{equation*}
Similarly 
\begin{equation*}\label{rnJ2A}
 J_2=\frac{i\Delta}{4}\Big(\int_{t_{n-\frac 12}}^{t_n} 
\partial_t^2 u(t, \, \cdot)(t_n-t)\, dt - 
\int_{t_{n-1}}^{t_{n-\frac 12}} 
\partial_t^2 u(t, \, \cdot)(t_{n-1}-t)\, dt\Big), 
\end{equation*}
where using 
$(t_n-t)\le k_n/2$ on the interval $(t_{n-\frac 12}, t_n)$ and 
 $(t_{n-1}-t)\le k_n/2$ on $(t_{n-1}, t_{n-\frac 12})$
together with \eqref{rn1A_4}, we obtain
\begin{equation*}\label{rnJ2B}
\norm{J_2}\le \frac{k_n}{8}\int_{t_{n-1}}^{t_n} 
\norm{\Delta\partial_t^2u(t,\, \cdot)}\, dt \le C k_n^2. 
\end{equation*}
As for $J_3$ we have 
\begin{equation*}\label{rnJ3A}
J_3= \frac{i}{4} f\Big(\abs{u^{n-\frac 12}}^2\Big)
\Big(\int_{t_{n-\frac 12}}^{t_n} 
\partial_t^2 u(t, \, \cdot)(t_n-t)\, dt - 
\int_{t_{n-1}}^{t_{n-\frac 12}} 
\partial_t^2 u(t, \, \cdot)(t_{n-1}-t)\, dt\Big), 
\end{equation*}
and hence, using \eqref{rn0.5A_4} combined with the
assumption on $ f $, we have 
\begin{equation*}\label{rnJ3B}
\norm{J_3}\le Ck_n 
\int_{t_{n-1}}^{t_n} \norm{\partial_t^2 u(t,\, \cdot)}\, dt \le C k_n^2. 
\end{equation*}
Consequently, we have proved the second assertion that  
\begin{equation*}
\norm{r^{n}}\le C k_n^2.
\end{equation*}

The proof of the first assertion, basically, follows
a similar path, however for the sake of completeness
we include it as well.
This time we subtract the Schr\"odinger equation at
the time level $t=t_{n-1}$ from the equation \eqref{rn0.5_4}, which yields 
\begin{equation*}
\begin{split}
r^{n-1/2} =& \frac{u^{n-1/2}-u^{n-1}-\frac {k_n}2 u_t^{n-1}}{k_n/2} - 
i\Delta\Big(\frac{u^{n-1/2}-u^{n-1}}{2}\Big)\\
&-i f(\abs{u^{n-1}}^2)
\Big(\frac{u^{n-1/2}-u^{n-1}}{2}\Big)  
=S_1-S_2-S_3.
\end{split}
\end{equation*}
Below we estimate the norms 
$\norm{S_i}, \, i=1,2,3$, using 
Taylor expansion of $u^{n-1/2}$ about $t_{n-1}$, of order 1 for $S_1$ 
and order $0$ for $S_2$ and $S_3$. As for $S_1$ we have 
\begin{equation*}\label{rnS1A}
S_1 = \frac 1{k_n}\int_{t_{n-1}}^{t_{n-1/2}}\partial_t^2 u(t,\, \cdot) 
(t_{n-1/2}-t)\, dt. 
\end{equation*}
Thus using  \eqref{rn0.5A_4} we deduce that 
\begin{equation}\label{rnS1B_4}
\norm{S_1}\le \frac 12 \int_{t_{n-1}}^{t_{n-1/2}}
\norm{\partial_t^2 u(t,\, \cdot)}\, dt \le C k_n. 
\end{equation}
The $S_2$ term is then 
\begin{equation*}\label{rnS2A_4}
S_2 = \frac {i\Delta}2\int_{t_{n-1}}^{t_{n-1/2}}\partial_t u(t,\, \cdot) 
\, dt,
\end{equation*}
which, by \eqref{rn0.5A_4}, can be estimated as 
\begin{equation}\label{rnS2B_4}
\norm{S_2}\le \frac 12 \int_{t_{n-1}}^{t_{n-1/2}}
\norm{\Delta\partial_t u(t,\, \cdot)}\, dt \le C k_n. 
\end{equation}
Finally we have 
\begin{equation*}\label{rnS3A}
S_3 = \frac {i}2 f\Big(\abs{u^{n-1}}^2\Big)
\int_{t_{n-1}}^{t_{n-1/2}}\partial_t u(t,\, \cdot) 
\, dt,
\end{equation*}
Once again using \eqref{rn0.5A_4}, we can estimate the $S_3$ term as 
\begin{equation}\label{rnS3B_4}
\norm{S_3}\le \frac{1}{2} \norm{ f\Big(\abs{u^{n-1}}^2\Big)}  
\int_{t_{n-1}}^{t_{n-1/2}}
\norm{\partial_t u(t,\, \cdot)}\, dt \le C k_n. 
\end{equation}
Summing up, the estimates \eqref{rnS1B_4}, \eqref{rnS2B_4} and 
\eqref{rnS3B_4} yields the first assertion of the theorem 
 and gives the estimate for $r^{n-1/2}$, 
and the proof is complete.  

\end{proof}

\section{Convergence Analysis} 

In this part we rely on a result by Zouraris \cite{Zouraris:2001_4}, viz. 

\begin{lemma} \label{ZourarisLemma2.1_4}
Let $ u_1, u_2 \in C (\bar{D}) $ and $ g \in C^1 ( [0, \infty ); \R) $, then 
we have
\begin{equation*}
\norm{g \left( \abs{u_1}^2 \right) - g \left( \abs{u_2}^2 \right)}
\leq \sup_{x \in I(u_1, u_2)} \abs{g' (x)} ( \norm{u_1}_\infty 
+ \norm{u_2}_\infty ) \norm{u_1 - u_2}
\end{equation*}
with $ I(u_1, u_2) := 
[0, \max \{ \norm{u_1}_\infty^2, \norm{u_2}_\infty^2 \} ] $.
\end{lemma}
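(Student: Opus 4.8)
The plan is to reduce the $L_2$-bound to a pointwise inequality and then integrate. Fix an arbitrary $x\in D$. Since $g\in C^1([0,\infty);\R)$ and both $\abs{u_1(x)}^2$ and $\abs{u_2(x)}^2$ are nonnegative reals, the mean value theorem applied to $g$ on the closed interval with endpoints $\abs{u_1(x)}^2$ and $\abs{u_2(x)}^2$ gives a point $\xi_x$ strictly between them with
\[
\abs{g(\abs{u_1(x)}^2)-g(\abs{u_2(x)}^2)}
=\abs{g'(\xi_x)}\,\abs{\,\abs{u_1(x)}^2-\abs{u_2(x)}^2\,}.
\]
Because $\abs{u_j(x)}^2\le\norm{u_j}_\infty^2$ for $j=1,2$, the point $\xi_x$ lies in $I(u_1,u_2)=[0,\max\{\norm{u_1}_\infty^2,\norm{u_2}_\infty^2\}]$, so $\abs{g'(\xi_x)}\le\sup_{x\in I(u_1,u_2)}\abs{g'(x)}$ uniformly in $x$.

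Next I would factor the difference of squares and estimate the two factors. Writing
\[
\abs{\,\abs{u_1(x)}^2-\abs{u_2(x)}^2\,}
=\abs{\,\abs{u_1(x)}-\abs{u_2(x)}\,}\cdot\bigl(\abs{u_1(x)}+\abs{u_2(x)}\bigr),
\]
I bound the sum $\abs{u_1(x)}+\abs{u_2(x)}\le\norm{u_1}_\infty+\norm{u_2}_\infty$ and use the reverse triangle inequality for complex numbers, $\abs{\,\abs{u_1(x)}-\abs{u_2(x)}\,}\le\abs{u_1(x)-u_2(x)}$. Combining these yields the pointwise bound
\[
\abs{g(\abs{u_1(x)}^2)-g(\abs{u_2(x)}^2)}
\le\sup_{x\in I(u_1,u_2)}\abs{g'(x)}\,
\bigl(\norm{u_1}_\infty+\norm{u_2}_\infty\bigr)\,
\abs{u_1(x)-u_2(x)}.
\]

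Finally I would pass to the $L_2(D)$-norm: squaring, integrating over $D$, and taking the square root, the two constant factors (the supremum of $\abs{g'}$ over $I$ and the sum of the sup-norms) come out of the integral, and $\bigl(\int_D\abs{u_1(x)-u_2(x)}^2\,dx\bigr)^{1/2}=\norm{u_1-u_2}$, which is exactly the claimed inequality. The steps are each elementary, and the only points requiring care are ensuring the mean-value point $\xi_x$ genuinely lands in $I(u_1,u_2)$ (handled by the sup-norm bounds on the endpoints) and invoking the reverse triangle inequality in its complex form $\bigl\lvert\,\abs{z}-\abs{w}\,\bigr\rvert\le\abs{z-w}$ so that the modulus of the difference $u_1-u_2$, rather than the difference of moduli, appears; these are the substantive ingredients, with the remaining manipulations being routine.
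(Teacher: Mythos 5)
Your proof is correct. Note that the paper itself gives no proof of this lemma; it is simply quoted from Zouraris \cite{Zouraris:2001_4}, so there is no in-paper argument to compare against. Your argument is the standard one and is complete: the mean value theorem applied pointwise to $g$ on the interval with endpoints $\abs{u_1(x)}^2$ and $\abs{u_2(x)}^2$ (both of which lie in $I(u_1,u_2)$, so the intermediate point does too), the factorization of the difference of squares, the reverse triangle inequality $\bigl\lvert\,\abs{z}-\abs{w}\,\bigr\rvert\le\abs{z-w}$ for complex values, and finally integration over $D$ to pass from the pointwise bound to the $L_2$ bound, with the two constants pulled out of the integral. The only cosmetic issue is the reuse of the symbol $x$ both for the spatial variable and for the argument of $g'$ in the supremum, but that is inherited from the statement itself and causes no ambiguity.
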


Now we are ready to formulate the main result of this paper: 

\begin{theorem}
Let $e^n:=U_h^n-u^n$ be the error at the time level $t=t_n$. 
Assume that $u $ satisfies the conditions \eqref{rn0.5A_4} and \eqref{rn1A_4}.
Then there is a constant $C$ such that 
$$
\norm{e^n}\le C(k^2+h^r), 
$$
and 
$$
\norm{\nabla e^n}\le C(k + h^{r-1}),
$$
with $k:=\max_{1\le n\le N}k_n$.
\end{theorem}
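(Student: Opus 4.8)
The plan is to follow the standard energy route for Schr\"odinger-type schemes, adapted to the two-level (predictor--corrector) time stepping. First I would split the error through the elliptic projection: writing $e^n=U_h^n-u^n=\theta^n+\rho^n$ with $\theta^n:=U_h^n-R_hu^n\in S_h$ and $\rho^n:=R_hu^n-u^n$, and likewise $\theta^{n-\frac12},\rho^{n-\frac12}$ at the half levels. The projection errors are controlled immediately by \eqref{Ellip1_4} with $s=r$, giving $\norm{\rho^n}\le Ch^r\norm{u^n}_r$ and $\norm{\nabla\rho^n}\le Ch^{r-1}\norm{u^n}_r$; the latter already furnishes the $h^{r-1}$ term of the gradient bound. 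Since $U_h^0=R_hu_0$ we have $\theta^0=0$, so the whole problem reduces to propagating $\norm{\theta^n}$ and $\norm{\nabla\theta^n}$.

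Next I would derive the error equations. Subtracting the two steps of the scheme (tested against $\chi\in S_h$) from the consistency identities \eqref{rn0.5_4}--\eqref{rn1_4}, and using the defining property $(\nabla\rho,\nabla\chi)=0$ of $R_h$, which lets the principal term depend on $\theta$ alone, the half step takes the form (the overall sign of the dispersive term being immaterial below)
\[
\Big(\tfrac{\theta^{n-\frac12}-\theta^{n-1}}{k_n/2},\chi\Big)-i\Big(\nabla\tfrac{\theta^{n-\frac12}+\theta^{n-1}}{2},\nabla\chi\Big)=(\Xi^{n-\frac12},\chi),
\]
where $\Xi^{n-\frac12}$ collects the residual $r^{n-\frac12}$, the discrete projection-time-difference $\frac{\rho^{n-\frac12}-\rho^{n-1}}{k_n/2}=\frac{2}{k_n}\int(R_h-I)u_t$ (bounded in $L_2$ by $Ch^r$ via \eqref{Ellip1_4} and the spatial regularity of $u$), and the nonlinear discrepancy $f(\abs{U_h^{n-1}}^2)\tfrac{U_h^{n-\frac12}+U_h^{n-1}}{2}-f(\abs{u^{n-1}}^2)\tfrac{u^{n-\frac12}+u^{n-1}}{2}$. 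For the $L_2$ bound I test with $\chi=\frac{\theta^{n-\frac12}+\theta^{n-1}}{2}$ and take real parts: the factor $i$ makes $\norm{\nabla(\cdot)}^2$ purely imaginary, so the gradient contribution drops out (the discrete reflection of the conservation structure of the Schr\"odinger operator), leaving $\frac{1}{k_n}\big(\norm{\theta^{n-\frac12}}^2-\norm{\theta^{n-1}}^2\big)$ bounded by $\norm{\Xi^{n-\frac12}}$ times the test function; cancelling one power of the norm gives an inequality for $\norm{\theta^{n-\frac12}}$. The nonlinear difference I split as $[f(\abs{U_h^{n-1}}^2)-f(\abs{u^{n-1}}^2)]\tfrac{U_h^{n-\frac12}+U_h^{n-1}}{2}+f(\abs{u^{n-1}}^2)\tfrac{(U_h^{n-\frac12}-u^{n-\frac12})+(U_h^{n-1}-u^{n-1})}{2}$, bounding the first bracket by Lemma~\ref{ZourarisLemma2.1_4} (yielding $C\norm{e^{n-1}}\le C(\norm{\theta^{n-1}}+h^r)$) and the second by $C(\norm{e^{n-\frac12}}+\norm{e^{n-1}})$. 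The crucial accounting is that in the half step the residual is multiplied by $k_n/2$, so the merely first-order bound $\norm{r^{n-\frac12}}\le Ck_n$ contributes at the $O(k_n^2)$ level to $\norm{\theta^{n-\frac12}}$; running the analogous test on the full step with $\chi=\frac{\theta^n+\theta^{n-1}}{2}$, the predictor enters only through the frozen coefficient $f(\abs{U_h^{n-\frac12}}^2)$, hence with an extra factor $k_n$, and together with $\norm{r^n}\le Ck_n^2$ this produces per-step increments $O(k_n^3)+O(k_nh^r)$. A discrete Gr\"onwall inequality over $n=1,\dots,N$, starting from $\theta^0=0$, then sums these to the claimed $\norm{\theta^n}\le C(k^2+h^r)$.

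The main obstacle is that every nonlinear estimate through Lemma~\ref{ZourarisLemma2.1_4} requires a uniform $L_\infty$ bound on the iterates $U_h^{n-1},U_h^{n-\frac12}$, which is not available a priori. I would close this by a bootstrap induction: assuming $\norm{\theta^m}\le C(k^2+h^r)$ for $m<n$, the inverse inequality \eqref{Invers1_4} gives $\norm{\theta^m}_\infty\le Ch^{-d/2}(k^2+h^r)$, which tends to $0$ under the mild mesh relation implicit in \eqref{Invers2_4}, so $\norm{U_h^m}_\infty\le\norm{u}_\infty+1$ and the Lipschitz factors in Lemma~\ref{ZourarisLemma2.1_4} stay uniformly bounded, allowing the Gr\"onwall step to close the induction at level $n$. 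This circular dependence between the error bound and the $L_\infty$ control is the delicate point, exactly as in the semidiscrete Theorem above. Finally, for the gradient estimate I would return to the full-step error identity but test with the discrete time difference $\chi=\frac{\theta^n-\theta^{n-1}}{k_n}$ and take imaginary parts: the time-difference term is then real and vanishes, while $\Im\big[i(\nabla\tfrac{\theta^n+\theta^{n-1}}{2},\nabla\tfrac{\theta^n-\theta^{n-1}}{k_n})\big]$ telescopes into $\frac{1}{2k_n}\big(\norm{\nabla\theta^n}^2-\norm{\nabla\theta^{n-1}}^2\big)$, giving a recursion driven by $\norm{\Xi^n}\,\norm{\theta^n-\theta^{n-1}}$. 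Here $\norm{\theta^n-\theta^{n-1}}$ is no longer accompanied by a full power of $k_n$, and it is precisely this that costs one order in time and explains the suboptimal $O(k)$ rate; combined with the $L_2$ bounds already established and with $\norm{\nabla\rho^n}\le Ch^{r-1}$ this yields $\norm{\nabla\theta^n}\le C(k+h^{r-1})$ and hence $\norm{\nabla e^n}\le C(k+h^{r-1})$.
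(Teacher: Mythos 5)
Your proposal is correct and follows essentially the same route as the paper's proof: the same elliptic-projection split $e^n=\theta^n+\rho^n$, the same choices of test function ($\theta^n+\theta^{n-1}$ with real parts for the $L_2$ bound, the discrete time difference with imaginary parts for the gradient), the same use of Lemma~\ref{ZourarisLemma2.1_4} on the split nonlinearity, the same $\mathcal{O}(k_n^3+k_nh^r)$ per-step bookkeeping through the predictor half step, and a discrete Gr\"onwall inequality. Two minor points of comparison: where you propose a bootstrap induction via the inverse inequality \eqref{Invers1_4} to secure the uniform $L_\infty$ bound on the iterates, the paper simply assumes $\sup_{t\in[0,T]}\norm{u(t,\cdot)}_\infty+\max_n\norm{U_h^n}_\infty<\delta$; and in the gradient estimate the accumulated term $h^r/k$ only becomes $h^{r-1}$ under the mesh relation $k\propto h$, which the paper states explicitly and which you should make explicit as well.
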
 
\begin{proof} 
We start proving the $L_2$ estimate first. To this end 
we rely on the classical approach and split the error $e^n$ 
into the Ritz projection error at the time level 
$n$ and the error between the fully approximate solution $U_h^n$ 
and the Ritz projection: 
\begin{equation*} \label{errorsplit}
e^n:= U_h^n-u^n= (U_h^n-R_h u^n)+ (R_h u^n- u^n)=:\theta^n+\rho^n. 
\end{equation*}
We invoke the standard estimate for the Ritz projection 
error $\rho^n$ from the theory and focus on the estimates for 
$\theta^n$. Note first that $ \theta^n $ satisfies the following
equation 
\begin{equation}\label{Theta1_4}
\begin{split}
\left( \frac{\theta^n-\theta^{n-1}}{k_n }, \chi \right) =& 
- i \left( \nabla \frac{\theta^n + \theta^{n - 1}}{2}, \nabla \chi \right)
- \left( \frac{\rho^n-\rho^{n-1}}{k_n }, \chi \right) \\
&+ i ( \omega^n, \chi) - ( r^n, \chi),
\end{split}
\end{equation}
with
\begin{equation*}
\omega^n = f\Big(\abs{U_h^{n-1/2}}^2\Big) \frac{U_h^n+U_h^{n-1}}{2}
-f\Big(\abs{u^{n-1/2}}^2\Big)\frac{u^n+u^{n-1}}{2}.
\end{equation*}
Choosing $ \chi = \theta^n+\theta^{n-1} $ leads to
\begin{equation*}\label{Theta2A}
\begin{split}
\norm{\theta^n}^2- \norm{\theta^{n-1}}^2=&
-\frac{ik_n}{2}\norm{\nabla(\theta^n+\theta^{n-1})}^2
- \left( \rho^n-\rho^{n-1}, \theta^n+\theta^{n-1} \right) \\
&+ i k_n (\omega^n,\theta^n+\theta^{n-1})
- k_n(r^n, \theta^n+\theta^{n-1}).  
\end{split}
\end{equation*}
Obviously the first term on the right-hand side above is purely imaginary. 
Taking the real part of the other terms on the right-hand side 
 we end up with 
\begin{equation}\label{Theta2_4}
\begin{split}
\norm{\theta^n}^2- \norm{\theta^{n-1}}^2=&
- \Re \left( \rho^n-\rho^{n-1}, \theta^n+ \theta^{n-1} \right) \\
&- k_n\Big[ \Im(\omega^n,\, \theta^n+ \theta^{n-1})
+\Re (r^n,\, \theta^n+ \theta^{n-1})  \Big]. 
\end{split}
\end{equation}
For the first term on the right-hand side we use the mean value
theorem together with \eqref{Ellip1_4} to get the estimate
\begin{equation}\label{Rho_4}
\abs{\left( \rho^n-\rho^{n-1}, \theta^n+ \theta^{n-1} \right)} \leq
C k_n h^r \left( \norm{\theta^n} + \norm{\theta^{n - 1}} \right).
\end{equation}
The estimate for $r^n$ was derived in the consistency section.
 It remains to estimate $\omega^n$. To do so we use the split: 
$$
\omega^n=\omega^n_1+\omega^n_2,
$$
where 
\begin{equation*}\label{Theta3}
\omega^n_1:= \Big(f\Big(\abs{U_h^{n-1/2}}^2\Big)-
f\Big(\abs{u^{n-1/2}}^2\Big)\Big)\frac{U_h^n+U_h^{n-1}}2,
\end{equation*}
and 
\begin{equation*}\label{Theta4}
\omega^n_2:= f\Big(\abs{u^{n-1/2}}^2\Big)\Big(\frac{e^n+e^{n-1}}2\Big).
\end{equation*}
To estimate $\omega^n_1$ and  $\omega^n_2$,
we assume that there exists a $\delta>0$ such that 
$$
\sup_{t\in [0,T]}\norm{u(t,\, \cdot)}_\infty+\max_n\norm{U_h^n}_\infty < \delta.
$$
Then by Lemma \ref{ZourarisLemma2.1_4} we have the following estimates:
\begin{equation}\label{Theta5_4}
\begin{split}
\norm{\omega_1^n} &\le \delta\sup_{x\in[0,\delta^2]}\abs{f^{\prime}(x)}
\cdot \Big(\norm{U_h^{n-1/2}}_\infty + \norm{u^{n-1/2}}_\infty\Big)
\norm{U_h^{n-1/2}-u^{n-1/2}}\\
& \le C\delta^2 \norm{e^{n-1/2}}.
\end{split}
\end{equation}
We have also 
\begin{equation}\label{Theta6_4}
\norm{\omega_2^n}\le \tilde C \Big( \norm{e^n}+  \norm{e^{n-1}}\Big),
\end{equation}
where
\[
\tilde C = \frac{1}{2} \sup_{x \in [0, \delta^2 ]} \abs{f(x)}.
\]
Inserting \eqref{Rho_4}, \eqref{Theta5_4}, \eqref{Theta6_4} and the estimate for $r^n$ into 
\eqref{Theta2_4}
we get 
\begin{equation*}%\label{Theta7}
\begin{split}
\norm{\theta^n}^2- \norm{\theta^{n-1}}^2 \le &
\Big( C k_n \delta^2  \norm{e^{n-1/2}}+ \tilde{C} k_n \left( \norm{e^n}+ 
 \norm{e^{n-1}} \right)+ C k_n^3 + C k_n h^r \Big)\\
&\times \Big(\norm{\theta^n}+\norm{\theta^{n-1}}\Big).
\end{split}
\end{equation*}
Consequently 
\begin{equation}\label{Theta8_4}
\begin{split}
\norm{\theta^n}- \norm{\theta^{n-1}} & \le 
C k_n \delta^2  \norm{e^{n-1/2}}+ \tilde{C} k_n \left( \norm{e^n}+ 
 \norm{e^{n-1}}\right)+ C k_n(k_n^2+h^r) \\
&\le Ck_n \delta^2  \norm{\theta^{n-1/2}}+ \tilde{C} k_n \left( \norm{\theta^n}+ 
 \norm{\theta^{n-1}}\right)+ C k_n(k_n^2+h^r).
\end{split}
\end{equation}
Hence 
\begin{equation*}\label{Theta9}
\begin{split}
(1 - \tilde{C} k_n)\norm{\theta^n} \le 
C k_n \delta^2  \norm{\theta^{n-1/2}}+(1+ \tilde{C} k_n)
\norm{\theta^{n-1}} + C k_n(k_n^2+h^r).
\end{split}
\end{equation*}
A similar argument for $\theta^{n-1/2}$, using the estimate 
for $r^{n-1/2}$,  reads as follows: 
\begin{equation}\label{Theta10_4}
\begin{split}
(1-\frac{\tilde{C} k_n}2)\norm{\theta^{n-1/2}} \le 
\Big(1+\frac{\tilde{C} k_n}2 + C k_n \delta^2\Big)
\norm{\theta^{n-1}}
+ C k_n(k_n+h^r).
\end{split}
\end{equation}
To proceed we assume that 
$\tilde{C} k<1.$ Then a combination of \eqref{Theta8_4} and \eqref{Theta10_4} gives that 
\begin{equation*}\label{Theta11}
\norm{\theta^n}- \norm{\theta^{n-1}}  \le \tilde{C} k_n 
\norm{\theta^n} + C_n
\norm{\theta^{n-1}} + C k_n (k_n^2 + h^r),
\end{equation*}
where
\[
C_n :=\Big(
\frac{C k_n \delta^2(1+\frac{\tilde{C} k_n}2 + C k_n \delta^2)}{1-\frac{\tilde{C} k_n}2}+ \tilde{C} k_n
 \Big).
\]
Relabeling $n$ to $j$ and summing over $j=1,\ldots, n$, we thus obtain 
\begin{equation*}\label{Theta12}
\norm{\theta^n}- \norm{\theta^{0}}  \le 
\sum_{j=1}^n\Big(\norm{\theta^j}- \norm{\theta^{j-1}} \Big)
\le\sum_{j=1}^n \Big(\tilde{C} k_j \norm{\theta^j}+ C_j \norm{\theta^{j-1}}
+ C k_j (k^2+h^r)\Big), 
\end{equation*}
and hence 
\begin{equation*}\label{Theta13}
\begin{split}
(1- \tilde{C} k)\norm{\theta^n} \le C t_n (k^2+h^r)
+  (1 + C_1)\norm{\theta^0}
+\sum_{j=1}^{n-1}\Big[\tilde{C} k_j+ C_{j+1}\Big]
\norm{\theta^j}. 
\end{split} 
\end{equation*}
Then 
\begin{equation*}\label{Theta14}
\norm{\theta^n}\le \frac{C t_n (k^2+h^r )}{1- \tilde{C} k }+
\frac{1 + C_1}{1- \tilde{C} k }\norm{\theta^0}
+ \sum_{j=1}^{n-1}
\frac{\tilde{C} k_j + C_{j+1}}{1- \tilde{C} k}
\norm{\theta^{j}}, 
\end{equation*}
so that by the discrete Gr\"onwall inequality we get 
\begin{equation}\label{Theta15_4}
\norm{\theta^n}\le \frac{C t_n (k^2+h^r )}{1- \tilde{C} k }
\exp\Big[\frac{1+ C_1}{1- \tilde{C} k }
+\sum_{j=1}^{n-1}
\frac{\tilde{C} k_j+ C_{j+1}}{1- \tilde{C} k}
  \Big].
\end{equation}
Thus for the error $e^n$ we have the $L_2$ estimate 
\begin{equation*}\label{Theta16}
\norm{e^n}\le \norm{\theta^n}+\norm{\rho^n}\le C t_n (k^2+h^r).
\end{equation*}
Combining \eqref{Theta10_4} and \eqref{Theta15_4} we also have 
\begin{equation*}\label{Theta17}
\norm{\theta^{n-1/2}}\le C t_n (k^2+h^r), 
\end{equation*}
and therefore we have the same $L_2$ estimate for the error 
$e^{n-1/2}$ in the intermediate 
time level: 
\begin{equation*}\label{Theta18}
\norm{e^{n-1/2}}\le C t_n (k^2+h^r).
\end{equation*}
It remains to derive the $L_2$-estimate for the gradient of the error: 
$\norm{\nabla e^n}$. Even in here the estimate for $\norm{\nabla \rho^n}$ is 
an approximation theory result and we need to give an error bound for 
$\norm{\nabla \theta^n}$. To this end we choose $ \chi = \theta^n - \theta^{n - 1} $
in \eqref{Theta1_4} which yields
\[
\begin{split}
\frac{1}{k_n} \norm{\theta^n - \theta^{n - 1}}^2 =&
- \frac{i}{2} \left( \norm{\nabla \theta^n}^2 - \norm{\nabla \theta^{n- 1}}^2 \right)
+ 2 i \Im (\nabla \theta^n, \nabla \theta^{n - 1} ) \\
&- \left( \frac{\rho^n-\rho^{n-1}}{k_n }, \theta^n - \theta^{n - 1} \right) 
+ i ( \omega^n, \theta^n - \theta^{n - 1}) \\
&- ( r^n, \theta^n - \theta^{n - 1}).
\end{split}
\]
Taking the imaginary part of the above relation leads to
\[
\begin{split}
\frac{1}{2} \left( \norm{\nabla \theta^n}^2 - \norm{\nabla \theta^{n- 1}}^2 \right)=&
 - \Im \left( \frac{\rho^n-\rho^{n-1}}{k_n }, \theta^n - \theta^{n - 1} \right) 
+ \Re ( \omega^n, \theta^n - \theta^{n - 1}) \\
&- \Im ( r^n, \theta^n - \theta^{n - 1}).
\end{split}
\]
From the Cauchy-Schwarz inequality and the triangle inequality it follows that
\[
\norm{\nabla \theta^n}^2 - \norm{\nabla \theta^{n- 1}}^2 \leq
2 \left( \norm{ \theta^n} + \norm{ \theta^{n- 1}} \right)
\left(\fitnorm{\frac{\rho^n - \rho^{n - 1}}{k_n}} + \norm{\omega^n} + \norm{r^n} \right).
\]
Notice that we already have estimated the terms in the second factor
on the right hand side. For the $ \theta $-terms we use Poincare's inequality.
We therefore have the inequality
\[
\norm{\nabla \theta^n}^2 - \norm{\nabla \theta^{n- 1}}^2 \leq
C ( k^2 + h^r) \left( \norm{\nabla \theta^n} + \norm{\nabla \theta^{n- 1}} \right).
\]
Canceling common factors yields
\[
\norm{\nabla \theta^n} \leq \norm{\nabla \theta^{n - 1}}
+ C ( k^2 + h^r).
\]
Applying the above inequality iteratively leads to
\begin{equation} \label{gradtheta1_4}
\norm{\nabla \theta^n} \leq C(k + h^{r - 1})
\end{equation}
under the assumption that $ k $ is propotional to $ h $.
The desired estimate for $ \norm{\nabla e^n} $ now
follows from \eqref{gradtheta1_4} and \eqref{Ellip1_4}.

\end{proof}

\begin{remark}

Regarding the estimate of $ \norm{\nabla e^n} $,
one would expect an order of $ \mathcal{O} (k^2 + h^{r - 1}) $
since we only have spatial derivatives (cf. \cite{Zouraris:2001_4}).
However, in \cite{Yves:91_4} (as in here) an \textit{optimal} error estimate for the
$ H^1 $-norm of order $ \mathcal{O} (k + h^{r - 1}) $ is derived.
On the other hand compared to the gradient estimate by
Zouraris \cite{Zouraris:2001_4}, our $ H^1 $-norm estimate is suboptimal.

\end{remark}

% \section{Numerical investigations}
% To justify the theoretical results, in this section, we compute ..

\section{Concluding remarks}
In this note we considered discretizing a nonlinear Schr\"odinger equation 
based on a two-level time stepping scheme with an underlying finite element 
spatial discretization. The nonlinearity is of cubic type with crucial 
applications in, e.g., plasma physics, nonlinear optics and oceanography. 
In the spatial discretization we follow a strategy by Akrivis and 
co-workers \cite{Akrivis_etal:91_4} 
which rely on classical estimates due to 
Schatz-Wahlbin \cite{Al_Lars:82_4} and Thom\'{e}e \cite{Vidar_4}. 
We also, briefly, outline a more abstract form of a time-space scheme 
by \cite{Yves:91_4} and its convergence properties derived by Lopez-Marcos and 
Sanz-Serna \cite{Lopez_Sanz-Serna:91_4}. 
The crucial steps in the spatial 
discretization include the split of the error 
via $L_2$, $H^1$ and elliptic projections and then proceed with the argument 
of dominating the error between approximation and projection with that of the 
projection error (error between exact solution and the 
corresponding projection). Here both solution and gradient estimates 
are derived. 

Then these results are further used in half-steps in time 
following the results by Zouraris \cite{Zouraris:2001_4} and the references 
therein. In this part we prove 
the consistency of the numerical scheme, derive stability estimates and 
establish the convergence analysis.  In the  
temporal discretization, we have considered
the $L_\infty(L_2)$ approximations. 
The $L_2$ results are optimal of accuracy ${\mathcal O}(k^2+h^r)$, 
 due to the maximal available regularity of the exact solution, 
where $h$ and $k$ are spatial and temporal mesh parameters, respectively. 
As for the gradient estimates we prove convergence of order ${\mathcal O}(k+h^{r-1})$.
A more elaborate and different approach is the subject of a 
forthcoming paper \cite{Asadzadeh_Standar_Zouraris_4}.

\end{document}